\numberwithin{equation}{section}
\theoremstyle{plain} 
\newtheorem{theorem}{Theorem}[section]
\newtheorem{corollary}[theorem]{Corollary}
\newtheorem{proposition}[theorem]{Proposition}
\theoremstyle{definition} 
\theoremstyle{definition} 
\newtheorem*{ex*}{Example}
\theoremstyle{remark} 
\theoremstyle{remark} 
\newtheorem*{remark*}{Remark}
\numberwithin{equation}{section}
\newcommand{\dd}{\partial}
\renewcommand{\dd}{{\,\operatorname{d}}}
\newcommand{\la}{\lambda}
\newcommand{\de}{\delta}
\newcommand{\be}{\beta}
\newcommand{\De}{\Delta}
\newcommand{\vpi}{\varphi}
\renewcommand{\Psi}{\overline{\Phi}}
\newcommand{\ii}[1]{\,\mathbf{I}\{#1\}}
\renewcommand{\P}{\operatorname{\mathsf{P}}} 
\newcommand{\E}{\operatorname{\mathsf{E}}}
\newcommand{\R}{\mathbb{R}}
\newcommand{\EE}{\mathcal{E}}
\newcommand{\bxi}[1]{{\xi_{w,#1}}}
\newcommand{\bS}{{S_w}}
\newcommand{\bK}{{\overline K}}
\renewcommand{\le}{\leqslant}
\renewcommand{\ge}{\geqslant}
\begin{document}

\begin{frontmatter}

\title{Improved nonuniform Berry--Esseen-type bounds}
\runtitle{Berry--Esseen bound}

%

\begin{aug}
\author{\fnms{Iosif} \snm{Pinelis}\thanksref{t2}\ead[label=e1]{ipinelis@mtu.edu}}
  \thankstext{t2}{Supported in part by NSF grant DMS-0805946
  }
\runauthor{Iosif Pinelis}


\address{Department of Mathematical Sciences\\
Michigan Technological University\\
Houghton, Michigan 49931, USA\\
E-mail: \printead[ipinelis@mtu.edu]{e1}}
\end{aug}

\begin{abstract} 
New nonuniform Berry--Esseen-type bounds for sums of independent random variables are obtained, motivated by recent studies concerning such bounds for nonlinear statistics. 
The proofs are based on the Chen--Shao concentration techniques within the framework of the Stein method. 
\end{abstract}

  
%

\begin{keyword}[class=AMS]
\kwd[Primary ]{60E15}
\kwd[; secondary ]{62E17} 
\end{keyword}


\begin{keyword}
\kwd{Berry--Esseen bounds}
\kwd{probability inequalities}
\kwd{sums of independent random variables}
\end{keyword}

\end{frontmatter}

\settocdepth{chapter}

\tableofcontents 

\settocdepth{subsubsection}

\theoremstyle{plain} 
\numberwithin{equation}{section}

\section{Summary and discussion}\label{intro} 

For any natural $n$, let $\xi_1,\dots,\xi_n$ be independent zero-mean random variables (r.v.'s) such that 
\begin{equation}\label{eq:=1}
	\E\xi_1^2+\dots+\E\xi_n^2=1, 
\end{equation}
and let 
\begin{equation*}
	S:=\xi_1+\dots+\xi_n.  
\end{equation*} 

Take any $v\in(0,\infty)$ and introduce  
\begin{equation*}
	\be_v:=\E g(\xi_1/v)+\dots+\E g(\xi_n/v),
\end{equation*}
where 
\begin{equation}\label{eq:g} 
	g(x):=x^2\wedge|x|^3
\end{equation}
for all $x\in\R$. 
Observe that 
for any $p\in[2,3]$ one has $g(x)\le|x|^p$ for all real $x$ and hence 
\begin{equation*}
\be_v\le\mu_p/v^p, 	
\end{equation*}
where 
\begin{equation}\label{eq:mu_p}
	\mu_p:=\E|\xi_1|^p+\dots+\E|\xi_n|^p  
\end{equation}
for all $p\in(0,\infty)$. 
 
Take also any $w\in(0,\infty)$ and let $\bxi 1,\dots,\bxi n$ be any independent r.v.'s such that for each $i=1,\dots,n$ 
\begin{equation*}
	\text{$\bxi i=\xi_i$ on the event $\{\xi_i\le w\}$, and $-\xi_i\le\bxi i\le w$ on the event $\{\xi_i>w\}$;} 
\end{equation*}
for instance, this condition will be satisfied if $\bxi i$ is defined as $\xi_i\wedge w$ (a Winsorization of $\xi_i$
) or as $\xi_i\ii{\xi_i\le w}$ (a truncation of $\xi_i$); cf.\ \cite{winzor}; note that 
\begin{equation}\label{eq:|bxi|<|xi|}
	\bxi i\le\xi_i\wedge w\quad\text{and}\quad|\bxi i|\le|\xi_i|. 
\end{equation}   
Introduce next  
\begin{equation*}
	\bS:=\bxi 1+\dots+\bxi n. 
\end{equation*}
Let $Z$ stand for a standard normal r.v. 
Let $A$, possibly with subscripts, denote positive real constants depending only on the values of the corresponding parameters; more precisely, let $A$ stand for an 
expression which 
takes positive real values 
depending -- in a continuous manner -- only on the values of the subscripts; let us allow such expressions to be different in different contexts.  
For any two expressions $\EE_1$ and $\EE_2$, let us also write $\EE_1\le_\bullet\EE_2$ in place of $\EE_1\le A_\bullet\,\EE_2$, where $\bullet$ stands for the relevant subscript(s). 

Take also any $\la\in(0,\infty)$. 

\begin{theorem}\label{th:}
For all $z\in\R$ 
\begin{equation}\label{eq:BE}
|\P(\bS>z)-\P(Z>z)|
  \le_{v,w,\la}\frac{\be_v}{e^{\la z}}. 
\end{equation} 
\end{theorem}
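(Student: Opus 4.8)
The plan is to run Stein's method for the indicator test function in the Chen--Shao form, exploiting throughout that the one-sided constraint $\bxi i\le w$ forces exponentially light \emph{upper} tails for $\bS$ and for each $\bS^{(i)}:=\bS-\bxi i$, which is what produces the factor $e^{-\la z}$. Fix $z$ and let $f=f_z$ be the bounded solution of the Stein equation $f_z'(x)-xf_z(x)=\ii{x>z}-\P(Z>z)$, so that
\begin{equation*}
\P(\bS>z)-\P(Z>z)=\E f_z'(\bS)-\E\bigl[\bS f_z(\bS)\bigr].
\end{equation*}
I would use the explicit formula for $f_z$ ($f_z(x)$ is a multiple of $e^{x^2/2}\P(Z>z)\,\P(Z\le x)$ for $x\le z$ and of $e^{x^2/2}\P(Z\le z)\,\P(Z>x)$ for $x>z$), the classical absolute bounds on $f_z$ and on the continuous function $\psi_z(x):=f_z'(x)-\ii{x>z}=xf_z(x)-\P(Z>z)$, and their \emph{nonuniform} refinements: away from the point $z$ the quantities $|f_z|$, $|\psi_z|$ and the Lipschitz-type modulus of $\psi_z$ are small -- of order $\P(Z>z)$ times a factor of slow growth -- while near $z$ they grow at most polynomially in $z$; the decisive elementary input is $\P(Z>z)\le_\la e^{-\la z}$, valid for every real $z$.

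The core of the argument is the Chen--Shao decomposition. For each $i$ introduce the nonnegative randomized kernel $\hat K_i$, supported between $0$ and $\bxi i$, with $\int_\R\hat K_i(t)\,\dd t=\bxi i^2$ and $\int_\R f_z'(\bS^{(i)}+t)\hat K_i(t)\,\dd t=\bxi i\bigl[f_z(\bS)-f_z(\bS^{(i)})\bigr]$. Writing $\bxi i f_z(\bS)=\bxi i f_z(\bS^{(i)})+\bxi i[f_z(\bS)-f_z(\bS^{(i)})]$, using $\E[\bxi i f_z(\bS^{(i)})]=\E\bxi i\cdot\E f_z(\bS^{(i)})$ by independence, and rearranging, one gets
\begin{multline*}
\P(\bS>z)-\P(Z>z)
=\E\Bigl[f_z'(\bS)\bigl(1-\sum\nolimits_i\bxi i^2\bigr)\Bigr]\\
+\sum_i\E\int_\R\hat K_i(t)\bigl[f_z'(\bS)-f_z'(\bS^{(i)}+t)\bigr]\,\dd t-\sum_i\E\bxi i\cdot\E f_z(\bS^{(i)}),
\end{multline*}
where the last sum appears because replacing $\xi_i$ by $\bxi i$ destroys the zero-mean property. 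That sum is harmless: since $\bxi i\le\xi_i\wedge w$ and $|\bxi i|\le|\xi_i|$, one has $\E\bxi i\le0$ with $\sum_i|\E\bxi i|=\sum_i\E\bigl[(\xi_i-\bxi i)\ii{\xi_i>w}\bigr]$, and likewise $1-\sum_i\E\bxi i^2=\sum_i\E\bigl[(\xi_i^2-\bxi i^2)\ii{\xi_i>w}\bigr]$; both reduce, up to $v,w$-constants, to the elementary bound $\xi_i^2\ii{\xi_i>w}\le_{v,w}v^2g(\xi_i/v)$ (a two-case check on whether $|\xi_i|\gtrless v$), whence $\sum_i|\E\bxi i|\le_{v,w}\be_v$ and $1-\sum_i\E\bxi i^2\le_{v,w}\be_v$.

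It then remains to bound the three groups of terms by $A_{v,w,\la}\,\be_v\,e^{-\la z}$. For the last sum, $\bigl|\sum_i\E\bxi i\cdot\E f_z(\bS^{(i)})\bigr|\le\bigl(\sum_i|\E\bxi i|\bigr)\max_i|\E f_z(\bS^{(i)})|\le_{v,w}\be_v\max_i|\E f_z(\bS^{(i)})|$, with $|\E f_z(\bS^{(i)})|\le_{v,w,\la}e^{-\la z}$ obtained by splitting on whether $\bS^{(i)}$ is near $z$ (where the probability is exponentially small for large $z$, as $\bxi j\le w$) or far from $z$ (where $|f_z|$ is small). For the first term, split $1-\sum_i\bxi i^2=(1-\sum_i\E\bxi i^2)-\sum_i(\bxi i^2-\E\bxi i^2)$: the first part gives $(1-\sum_i\E\bxi i^2)\,\E f_z'(\bS)$, bounded by $A_{v,w}\be_v\cdot|\E f_z'(\bS)|$ with $|\E f_z'(\bS)|\le_{v,w,\la}e^{-\la z}$ (here the unit jump of $f_z'$ contributes exactly $\P(\bS>z)\le_{v,w,\la}e^{-\la z}$); in the second part, $\E[f_z'(\bS^{(i)})(\bxi i^2-\E\bxi i^2)]=0$ lets one replace $\E[f_z'(\bS)(\bxi i^2-\E\bxi i^2)]$ by $\E[(f_z'(\bS)-f_z'(\bS^{(i)}))(\bxi i^2-\E\bxi i^2)]$. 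This last, together with the ``main'' term $\sum_i\E\int\hat K_i(t)[f_z'(\bS)-f_z'(\bS^{(i)}+t)]\,\dd t$, is treated by writing $f_z'(x)=\psi_z(x)+\ii{x>z}$: the continuous part is handled via the (location-dependent) Lipschitz bound on $\psi_z$, and the jump part via the indicator $\ii{z\text{ lies between }\bS^{(i)}+t\text{ and }\bS}$ -- crucially kept in this form, not coarsened to $\ii{|\bS^{(i)}-z|\le|\bxi i|}$ -- after which one integrates in $t$ and invokes a concentration inequality.

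That concentration inequality is the step I expect to be the main obstacle. One needs a bound on $\P\bigl(\bS^{(i)}\in[a,a+\ell]\bigr)$ -- and on the corresponding integrated kernel expressions -- that is simultaneously sharp on the Berry--Esseen scale $\be_v$ for short intervals, so that after integrating in $t$ and summing in $i$ the short-interval contributions collapse to truncated third moments dominated by $A_{v,w}\,v^3\be_v$ (this is exactly where the $\wedge$ in $g(x)=x^2\wedge|x|^3$ is indispensable, since the raw quantities $\E|\bxi i|^3$ need not be finite), and carries the exponential factor $e^{-\la a_+}$, available because $\bxi j\le w$ gives Bennett/Bernstein-type control of the upper tails of $\bS$ and $\bS^{(i)}$. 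Merging the classical Chen--Shao local-concentration estimate with this exponential upper-tail bound into a single inequality valid for all $z$ is the technical heart. Once it is available, assembling the three groups of terms yields \eqref{eq:BE} for $z\ge0$; for $z\le0$ one has $e^{-\la z}\ge1$, all weights may be taken equal to $1$, and the same computation gives the uniform bound $|\P(\bS>z)-\P(Z>z)|\le_{v,w}\be_v\le_{v,w,\la}\be_v e^{-\la z}$, completing the proof.
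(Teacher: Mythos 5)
Your route is essentially the paper's: the Stein equation for the indicator test function, the Chen--Shao leave-one-out decomposition with the randomized kernel, separate treatment of the second-moment deficit and of the mean shift caused by Winsorization (both correctly reduced to $\be_v$ via $\xi_i^2\ii{\xi_i>w}\le_{v,w}v^2g(\xi_i/v)$), the split of $f_z'$ into its jump and smooth parts, and the exponential factor extracted from the one-sided bound $\bxi j\le w$ via Bennett--Hoeffding. This is exactly how the paper proceeds (its terms $R_1,R_3$ are your deficit and mean-shift terms, and $R_{2,1},R_{2,2}$ are your jump and smooth parts), so on the structural level there is nothing to object to.

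The gap is the one you yourself flag: the randomized concentration inequality $\P(a\le\bS-\bxi i\le b)\le_{v,w,\la}(b-a+\be_v)e^{-\la a}$ is not proved but only described as ``the technical heart.'' It is indeed the heart: the paper isolates it as Proposition~\ref{prop:concentr}, and its proof is where the genuinely new work lies. Merging the exponential factor with the local estimate is comparatively routine (Chen and Shao already do this with $\mu_3$ in place of $\be_v$); what is not routine is getting the additive term down to $\be_v$, which may be much smaller than $\mu_3$. The paper achieves this by choosing the concentration scale $\de=v^3\be_v/2$, so that $\sum_j\E|\xi_j|(|\xi_j|\wedge\de)\ge\tfrac12$ follows from the pointwise inequality $x(x\wedge d)\ge x^2-\tfrac{v^3}{4d}\,g(\tfrac xv)$, together with the sharp estimate $\E|\xi_i|\le v\be_v^{1/3}$ proved via a Young-type inequality. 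Neither step is visible in your outline, and without them (or some substitute) the short-interval contributions cannot be collapsed to $A_{v,w}\be_v$ as you assert. The same lemma, in its uniform unweighted form, is also needed for your $z\le0$ case, so the gap is not confined to the large-$z$ regime. Everything else you invoke --- the nonuniform bounds on $f_z$ and $\psi_z$, and $\P(\bS-\bxi i>z-1)\le_{w,\la}e^{-\la z}$ --- is standard and available in Chen--Shao.
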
 

The necessary proofs will be given in Section~\ref{proofs}. 

Taking \eqref{eq:BE} with $\bxi i=\xi_i\ii{\xi_i\le w}$, $v=w=1$ and $\la=1/2$, and then replacing there $\be_1$ by its upper bound $\mu_3$, one has, as a corollary, the result obtained by Chen and Shao \cite[(6.15)]{chen-shao05}. 

An advantage of such bounds is that they decrease fast as $z\to\infty$. 
Moreover, if the right tails of (the distributions of) the r.v.'s $\xi_i$ are light enough, then $\P(\bS>z)$ will differ little from $\P(S>z)$ for large $z$. 
This observation can be formalized in a variety of ways, some of which are presented in the following proposition.

\begin{proposition}\label{prop:osipov} 
For all positive $p$, $w$, $c$, $y$, and $z$ 
\begin{align}\label{eq:diff}
0\le\P(S>z)-\P(\bS>z)
  \le P_1\wedge\dots\wedge P_5, 
\end{align} 
where 
\begin{align}
P_1&:=\P(\max_i\xi_i>w), \notag \\ 
P_2&:=\P(\max_i\xi_i>y)+Q(z,y)\sum_i\P(\xi_i>w), \notag \\ 
Q(z,y)&:=\max_i\P(S-\xi_i>z-y,\;\max_{j\ne i}\xi_j\le y), \notag \\ 
P_3&:=\P(\max_i\xi_i>y)+2Q_*(z,y)\P(\max_i\xi_i>w), \label{eq:P3} \\ 
Q_*(z,y)&:=Q(z,y)\vee\P(S>z,\;\max_j\xi_j\le y), \notag \\ 
P_4&:=\P\Big(\max_i\xi_i>\frac z{1+p/2}\Big)+\frac{A_{p,c}}{(c+z)^p}\P(\max_i\xi_i>w), \notag \\ 
P_5&:=\frac{A_{p,w,c}\mu_p}{(c+z)^p}.  \notag 
\end{align} 
\end{proposition}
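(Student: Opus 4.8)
The plan is to establish the nonnegativity and the five individual upper bounds $P_1,\dots,P_5$ separately; since the left-hand side is a single quantity, bounding it above by each $P_j$ in turn yields the bound by the minimum. The nonnegativity $\P(S>z)-\P(\bS>z)\ge0$ follows immediately from \eqref{eq:|bxi|<|xi|}, which gives $\bS\le S$ pointwise, hence $\{\bS>z\}\subseteq\{S>z\}$. For $P_1$: on the event $\{\max_i\xi_i\le w\}$ we have $\xi_i\le w$ for every $i$, so by the defining property of $\bxi i$ (namely $\bxi i=\xi_i$ on $\{\xi_i\le w\}$) we get $\bxi i=\xi_i$ for all $i$, hence $\bS=S$ on that event; therefore $\{S>z\}\setminus\{\bS>z\}\subseteq\{\max_i\xi_i>w\}$, giving $\P(S>z)-\P(\bS>z)\le P_1$.

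The bounds $P_2$ and $P_3$ are the substantive ones and use a "one big jump" decomposition. First I would split $\{S>z\}\setminus\{\bS>z\}$ according to whether $\max_i\xi_i>y$ or not. On the complementary part, where $\max_i\xi_i\le y$ but still $S>z$ and $\bS\le z$, the discrepancy must come from at least one index with $\xi_i>w$ (else $\bS=S$ as above); by a union bound over $i$ and independence, the contribution is at most $\sum_i\P(\xi_i>w)\,\P(S-\xi_i>z-y,\ \max_{j\ne i}\xi_j\le y)$ after observing that, off the exceptional index $i$, the remaining sum $S-\xi_i$ must exceed $z-y$ while all other $\xi_j\le y$; bounding each factor $\P(S-\xi_i>z-y,\ \max_{j\ne i}\xi_j\le y)$ by $Q(z,y)$ gives $P_2$. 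For $P_3$ one refines this: rather than summing $\P(\xi_i>w)$ over $i$, one notes $\sum_i\P(\xi_i>w)$ can be replaced (up to the factor $2Q_*(z,y)$) by $\P(\max_i\xi_i>w)$ via an inclusion–exclusion / Bonferroni-type argument, where the factor $2$ and the enlarged $Q_*(z,y)$ (taking the max with $\P(S>z,\ \max_j\xi_j\le y)$) absorb the second-order terms; this is essentially the Osipov-type device referenced in the proposition's label.

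The bounds $P_4$ and $P_5$ are the "moment" variants and should follow by combining the event-splitting above with a Markov/Rosenthal-type tail estimate for sums of independent r.v.'s. For $P_5$, on $\{S>z\}\setminus\{\bS>z\}$ one has $\xi_i>w$ for some $i$ and $S>z$; bounding this probability by $\P(S>z)$ and applying a $p$-th moment bound (a nonuniform Markov inequality for $S$, valid for $p\in(0,\infty)$, giving something like $\P(S>z)\le_{p,c}\mu_p/(c+z)^p$ for $z$ away from where the trivial bound $1$ suffices) yields $P_5$; the constant may depend on $w$ through the relation between $\mu_p$ and lower-order moments. For $P_4$, split according to $\max_i\xi_i>z/(1+p/2)$ or not; on the latter event the single large jump $\xi_i>w$ is bounded, and one gets a factor $\P(\max_i\xi_i>w)$ times a polynomial tail bound $A_{p,c}/(c+z)^p$ coming from the remaining (now more tightly controlled) sum. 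The main obstacle I anticipate is getting the constants and the shift $c$ right in $P_4$ and $P_5$—in particular verifying the nonuniform moment inequality in the stated form with an arbitrary shift parameter $c$ and checking that the factor $1+p/2$ in the threshold $z/(1+p/2)$ is exactly what the Rosenthal-type splitting forces—whereas the combinatorics behind $P_2,P_3$ is standard once the event decomposition is set up carefully.
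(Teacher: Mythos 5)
Your plan for the nonnegativity and for $P_1$ through $P_4$ matches the paper's proof in all essentials: the paper bounds $\P(S>z)-\P(\bS>z)$ by $\P(S>z,\max_i\xi_i>w)$, splits on $\max_i\xi_i>y$, and gets $P_2$ by a union bound plus independence; it gets $P_3$ by a dichotomy on whether $\P(\max_i\xi_i>w)\le\frac12$ (in which case $\sum_i\P(\xi_i>w)\le2\P(\max_i\xi_i>w)$, so $P_2\le P_3$) or $>\frac12$ (in which case the trivial bound $\P(S>z)\le\P(\max_i\xi_i>y)+\P(S>z,\max_j\xi_j\le y)\le P_3$ is used --- this is exactly why $Q_*$ is enlarged by the term $\P(S>z,\max_j\xi_j\le y)$, a point your ``Bonferroni absorbs second-order terms'' description gestures at but does not quite pin down); and it gets $P_4$ by taking $y=z/(1+p/2)$ in $P_3$ and bounding $Q_*(z,y)$ via the Bennett--Hoeffding inequality, which with this choice of $y$ yields exponent $(z-y)/y=p/2$ and hence $\le_p z^{-p}$ for $z\ge c$, the range $z\in[0,c]$ being covered by $P_1$.

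There is, however, a genuine gap in your route to $P_5$. You propose to bound $\P(S>z)-\P(\bS>z)$ by $\P(S>z)$ and then invoke an inequality of the form $\P(S>z)\le_{p,c}\mu_p/(c+z)^p$. That inequality is false: in the i.i.d.\ case with bounded summands $\xi_i=X_i/\sqrt n$, one has $\mu_p\asymp n^{1-p/2}\to0$ for $p>2$ while $\P(S>z)\to\P(Z>z)>0$ for fixed $z$, so no constant $A_{p,c}$ can work. (A Rosenthal-type bound gives $\E|S|^p\le_p\mu_p+1$, and the ``$+1$'' --- the Gaussian contribution --- is exactly what cannot be absorbed into a multiple of $\mu_p$.) The step that must not be discarded is the restriction to the event $\{\max_i\xi_i>w\}$: it is this event that forces a factor proportional to $\mu_p$. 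The paper derives $P_5$ from $P_4$ by applying Markov's inequality to each of the two max-probabilities appearing there, namely $\P(\max_i\xi_i>\frac z{1+p/2})\le\sum_i\P(\xi_i>\frac z{1+p/2})\le\mu_p(1+p/2)^p/z^p\le_{p,c}\mu_p/(c+z)^p$ for $z\ge c$, and $\P(\max_i\xi_i>w)\le\mu_p/w^p$, with the range $z\in[0,c]$ again handled by $\De_w(z)\le P_1\le\mu_p/w^p\le_{p,w,c}\mu_p/(c+z)^p$. Your worry about the factor $1+p/2$ is resolved by the Bennett--Hoeffding computation above rather than by a Rosenthal splitting, but that part of your plan is repairable; the $P_5$ step as written is not.
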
 

Upper bounds on $Q(z,y)$ and $Q_*(z,y)$ can be obtained using exponential bounds on large deviation probabilities such as ones due to Bennett \cite{bennett}, Hoeffding \cite{hoeff63}, and Pinelis and Utev \cite{pin-utev-exp}; in fact, the Bennett--Hoeffding inequality was used to obtain the bounds $P_4$ and $P_5$ defined above. One can also use bounds that are better than the corresponding exponential ones, such as ones in \cite{pin98,bent-ap,pin-hoeff}. 

Combining \eqref{eq:BE} and \eqref{eq:diff}, one immediately obtains 

\begin{corollary}\label{cor:pipiras}
For all positive $p$, $v$, $w$, $\la$, $c$, and $z$ 
\begin{align}\label{eq:pip}
|\P(S>z)-\P(Z>z)|
  \le_{p,v,w,\la,c}\frac{\be_v}{e^{\la z}}+(P_1\wedge\dots\wedge P_5). 
\end{align} 
\end{corollary}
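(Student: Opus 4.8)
The plan is to derive \eqref{eq:pip} directly from the triangle inequality, writing
\[
|\P(S>z)-\P(Z>z)|\le\bigl(\P(S>z)-\P(\bS>z)\bigr)+|\P(\bS>z)-\P(Z>z)|,
\]
where the first summand needs no absolute value because $\P(S>z)-\P(\bS>z)\ge0$; the latter fact is part of the conclusion of Proposition~\ref{prop:osipov}, and it follows at once from $\bxi i\le\xi_i$ (see \eqref{eq:|bxi|<|xi|}), which gives $\bS\le S$ and hence $\{\bS>z\}\subseteq\{S>z\}$.

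First I would bound the truncation error $\P(S>z)-\P(\bS>z)$ by $P_1\wedge\dots\wedge P_5$ via Proposition~\ref{prop:osipov}; since that proposition is valid for every positive $p$, $w$, $c$ and every positive $y$, one is moreover free to optimize over the auxiliary parameter $y$ entering $P_2$ and $P_3$. Next I would bound the Berry--Esseen error $|\P(\bS>z)-\P(Z>z)|$ by $A_{v,w,\la}\,\be_v/e^{\la z}$ via Theorem~\ref{th:}. Adding the two estimates and absorbing the factor $A_{v,w,\la}$ into the larger admissible expression $A_{p,v,w,\la,c}$ gives exactly \eqref{eq:pip} with the stated subscripts.

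The only point that needs attention --- and essentially the only substantive content of this step --- is notational consistency: the sum $\bS=\bxi1+\dots+\bxi n$ appearing in Theorem~\ref{th:} and in Proposition~\ref{prop:osipov} must refer to the \emph{same} random variable, namely the one built from the fixed family $\bxi1,\dots,\bxi n$ introduced before both statements and satisfying the truncation/Winsorization condition imposed there; once this is granted, both bounds apply to it simultaneously. One should also check that the range of $z$ causes no trouble: \eqref{eq:BE} holds for all real $z$ while \eqref{eq:diff} holds for all positive $z$, so the combined bound \eqref{eq:pip} is valid for all positive $z$, as asserted. I do not anticipate any genuine obstacle here: the corollary is a pure bookkeeping consequence of Theorem~\ref{th:} and Proposition~\ref{prop:osipov}.
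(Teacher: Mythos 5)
Your proposal is correct and is exactly the paper's argument: the paper proves the corollary in one line by combining \eqref{eq:BE} and \eqref{eq:diff}, i.e.\ the triangle inequality applied to the decomposition through $\P(\bS>z)$, with Proposition~\ref{prop:osipov} controlling $\P(S>z)-\P(\bS>z)$ and Theorem~\ref{th:} controlling $|\P(\bS>z)-\P(Z>z)|$. Your additional remarks on signs, the shared meaning of $\bS$, and the range of $z$ are accurate but not needed beyond what the paper already records.
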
 

An inequality similar to $|\P(S>z)-\P(Z>z)|\le_{p,v,w,\la,c}\frac{\be_v}{e^{\la z}}+P_5$, but with $\frac{\mu_3}{(1+z)^p}$ in place of $\frac{\be_v}{e^{\la z}}$, was obtained for $p\ge3$ by Pipiras \cite{pipiras}, which in turn is an extension of the corresponding result by Osipov \cite{osip67} for identically distributed $\xi_i$'s. 

In the ``i.i.d.'' case, when the $\xi_i$'s are copies in distribution of a r.v.\ $X/\sqrt n$, with $\E X=0$ and $\E X^2=1$, the term $\frac{\be_v}{e^{\la z}}$ in \eqref{eq:pip} is $\le_v\frac{\E|X|^3}{\sqrt n}\,e^{-\la z}$, which decreases fast in $z$; at that, up to a constant factor, the bound $P_5$ is $\frac{\E|X|^p}{(c+z)^pn^{p/2-1}}$, which decreases fast both in $z$ and $n$ if $p$ is taken to be large and $\E|X|^p<\infty$.  

Of the other known Berry--Esseen-type bounds, the one closest to \eqref{eq:BE} and \eqref{eq:pip} in form is apparently as follows: 
\begin{equation}\label{eq:bikelis}
|\P(S>z)-\P(Z>z)|
  \le A\sum_{i=1}^n\E\Big(\frac{\xi_i^2}{(|z|+1)^2}\wedge\frac{|\xi_i|^3}{(|z|+1)^3}\Big)
  \le_v\be_v. 
\end{equation} 
This result was obtained in a slightly more general form by Bikelis \cite[Theorem~4]{bik66} (see also \cite[Chapter V, Supplement 24]{pet75}), and in its present form by Chen and Shao \cite[Theorem 2.2]{chen01}. 

The improvement of the existing results provided by Theorem~\ref{th:} of this note was needed in \cite{nonlinear} to determine the optimal zone of large deviations in which a certain nonuniform Berry--Esseen-type bound for nonlinear statistics holds. 

\section{Proofs}\label{proofs} 

An important role in the proof of Theorem~\ref{th:} is played by the following upper bound on the concentration probability. 

\begin{proposition}\label{prop:concentr}
For all real numbers $a$ and $b$ such that $a<b$ and all $i=1,\dots,n$
\begin{equation}\label{eq:prop6.1}
	\P(a\le S_w-\bxi i\le b)\le_{v,w,\la}(b-a+\be_v)e^{-\la a}.   
\end{equation} 
\end{proposition}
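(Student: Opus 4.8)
The plan is to combine an exponential change of measure with a Chen--Shao-type concentration inequality in its ordinary, unweighted form. Throughout write $W:=\bS-\bxi i=\sum_{j\ne i}\bxi j$, a sum of independent r.v.'s with $\bxi j\le w$ and $\sum_{j\ne i}\E\bxi j^2\le1$, and recall $\be_v\le\mu_2/v^2=1/v^2$. I begin with three reductions. First, since $\bxi j\le w$ one has $\E e^{\la\bxi j}=1+\la\,\E\bxi j+\E[e^{\la\bxi j}-1-\la\bxi j]\le1+\la|\E\bxi j|+\tfrac12\la^2e^{\la w}\E\bxi j^2$; using $\sum_{j\ne i}|\E\bxi j|\le_{v,w}\be_v$ (which follows from $|\E\bxi j|\le\tfrac2w\E[\xi_j^2\ii{\xi_j>w}]$ and $\xi_j^2\ii{\xi_j>w}\le_{v,w}v^2g(\xi_j/v)$) and $\be_v\le1/v^2$, this gives $\prod_{j\ne i}\E e^{\la\bxi j}\le_{v,w,\la}1$, whence the crude bound $\P(a\le W\le b)\le\P(W\ge a)\le e^{-\la a}\prod_{j\ne i}\E e^{\la\bxi j}\le_{v,w,\la}e^{-\la a}$; this already yields \eqref{eq:prop6.1} whenever $\be_v$ exceeds a fixed threshold depending only on $v,w,\la$, so we may assume $\be_v$ as small as we wish (small enough that also $\sum_{j\ne i}|\E\bxi j|\le\tfrac1{2\la}$). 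Second, if $a$ is bounded below in terms of $v,w,\la$, then $e^{-\la a}$ is bounded below and \eqref{eq:prop6.1} follows from an unweighted Chen--Shao-type concentration inequality $\P(a\le W\le b)\le_{v,w}b-a+\be_v$ (valid since, $\be_v$ being small, $\sum_{j\ne i}\E\bxi j^2$ is bounded below), so we may assume $a$ is large. Third, splitting $[a,b]$ into unit-length subintervals and summing the resulting geometric series, we may assume $b-a\le1$.

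Next comes the tilting step. For each $j\ne i$ let $\bxi j^{\la}$ be a r.v.\ whose distribution has density $x\mapsto e^{\la x}/\E e^{\la\bxi j}$ with respect to that of $\bxi j$ (well defined since $\bxi j\le w$), and put $W^{\la}:=\sum_{j\ne i}\bxi j^{\la}$, again a sum of independent r.v.'s bounded above by $w$. The standard change-of-measure identity gives
\begin{equation*}
\P(a\le W\le b)=\Big(\prod_{j\ne i}\E e^{\la\bxi j}\Big)\,\E\big[e^{-\la W^{\la}}\ii{a\le W^{\la}\le b}\big]\le e^{-\la a}\Big(\prod_{j\ne i}\E e^{\la\bxi j}\Big)\,\P(a\le W^{\la}\le b),
\end{equation*}
and the product is $\le_{v,w,\la}1$ by the first reduction, so it suffices to prove the unweighted bound $\P(a\le W^{\la}\le b)\le_{v,w,\la}b-a+\be_v$, for which I would apply the Chen--Shao concentration inequality to $W^{\la}$. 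Its remainder is governed by a Lyapunov-type quantity of the form $\sum_{j\ne i}\E\,g(\bxi j^{\la}/v)$ together with the tilted means; since the density $e^{\la x}/\E e^{\la\bxi j}$ is at most $e^{\la w}/\E e^{\la\bxi j}$ with $\E e^{\la\bxi j}\ge1-\la\sum_{j\ne i}|\E\bxi j|\ge\tfrac12$ (by the first reduction), monotonicity of $g$ in $|\,\cdot\,|$ and $|\bxi j|\le|\xi_j|$ give $\sum_{j\ne i}\E\,g(\bxi j^{\la}/v)\le_{v,w,\la}\sum_{j\ne i}\E\,g(\xi_j/v)=\be_v$, and similarly the tilted means sum to $\le_{v,w,\la}\be_v$ in absolute value. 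Finally the variance sum $\sum_{j\ne i}\Var(\bxi j^{\la})$ stays bounded below by a positive $v,w,\la$-constant: $\be_v$ being small forces $\sum_{j\ne i}\E\bxi j^2$ to be bounded below (its shortfall from $1$ is $\le_{v,w}\be_v$), this survives tilting of the mass within $[-v,v]$ up to $e^{\pm\la v}$ factors, and the remaining far-out part of the mass carries total variance at most $v^2\be_v$ because $g(x/v)=(x/v)^2$ for $|x|\ge v$. This yields $\P(a\le W^{\la}\le b)\le_{v,w,\la}b-a+\be_v$, and hence \eqref{eq:prop6.1}.

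The step I expect to be the main obstacle is the unweighted concentration inequality itself, together with the bookkeeping that ties its remainder back to $\be_v$ rather than to a coarser quantity of scale $v$. One needs a Chen--Shao-type concentration bound $\P(a\le W\le b)\le_{v,w}b-a+\be_v$ valid for sums of independent, \emph{not necessarily centered}, bounded-above r.v.'s; this is proved from the Stein concentration identity $\E[(W-\E W)f(W)]=\sum_{j\ne i}\E\int_{\R}f'(W-\bxi j+t)\,\bK_j(t)\,\dd t$ (with $\bK_j\ge0$, $\int_{\R}\bK_j=\Var\bxi j$) applied to a bounded ramp-type $f$ combined with an \emph{internal truncation} of the $\bxi j$ at scale of order $v$, the delicate points being that this truncation scale must not leak linearly into the final bound and that the truncated variances must stay bounded below; one must also verify that the tilting distorts all the moment, mean and variance quantities only by $v,w,\la$-bounded factors. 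An alternative that avoids tilting altogether is to run the same Stein concentration identity directly with $f$ a bounded, mollified, capped multiple of $x\mapsto e^{\la x}\ii{a\le x\le b}$ and the same internal truncation; the accounting is then essentially identical, only now carried throughout with the exponential weight present.
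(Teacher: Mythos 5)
Your route is genuinely different from the paper's. The paper follows Chen--Shao's Proposition~6.1: the exponential weight is built directly into the Stein test function (an increasing ramp multiplied by $e^{\la s}$), and the work consists in choosing the internal truncation level $\de=v^3\be_v/2$ and proving the lower bound $\sum_{j\ne i}\E|\xi_j|(|\bxi j|\wedge\de)\ge0.4$ (via the sharp estimate $\E|\xi_i|\le v\be_v^{1/3}$), which is exactly the ``delicate point'' you flag at the end. You instead tilt the measure first and then invoke an unweighted concentration inequality. That architecture is legitimate, and your reductions (exponential Markov to assume $\be_v$ small, splitting $[a,b]$ into unit intervals, the change-of-measure identity, the bound $\prod_{j\ne i}\E e^{\la\bxi j}\le_{v,w,\la}1$, and the transfer $\sum_{j\ne i}\E g(\bxi j^\la/v)\le_{v,w,\la}\be_v$ via the bounded Radon--Nikodym density) all check out. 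But the proof is not complete: the unweighted inequality $\P(a\le W^\la\le b)\le_{v,w,\la}(b-a)+\be_v$ is precisely where all of the paper's actual content lives, and you defer it.

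There is also one concretely false step: the assertion that the tilted means sum to $\le_{v,w,\la}\be_v$ in absolute value. To first order in $\la$ one has $\E\bxi j^\la\approx\E\bxi j+\la\Var\bxi j$, so $\sum_{j\ne i}|\E\bxi j^\la|$ is of order $\la\sum_j\E\bxi j^2\approx\la$, a constant, not $O(\be_v)$. If your unweighted concentration lemma consumes ``sum of means'' as a small remainder, the argument breaks here. The repair is to exploit translation invariance of concentration bounds --- center each $\bxi j^\la$ and shift the interval by $\E W^\la$ --- but then the remainder picks up $\sum_{j\ne i}g(m_j/v)$ with $m_j:=\E\bxi j^\la$, and $\sum_j m_j^2\approx\la^2\sum_j(\E\xi_j^2)^2$. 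Showing this is $\le_{v,\la}\be_v$ is true but not ``similar'' to your $g$-moment transfer: it needs something like $\E\xi_j^2\le 2v^2\big(\E g(\xi_j/v)\big)^{2/3}$ followed by $\sum_j\big(\E g(\xi_j/v)\big)^{4/3}\le\be_v^{4/3}$. A related small slip occurs in your variance lower bound, where the shortfall of $\sum_{j\ne i}\E\bxi j^2$ from $1$ includes $\E\xi_i^2$, which is only $O_v(\be_v^{2/3})$, not $O_{v,w}(\be_v)$ --- harmless for a lower bound of $1/2$, but symptomatic of the same issue. In short: viable strategy, correctly located main obstacle, but one stated estimate is wrong as written and the load-bearing lemma remains unproved.
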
 

In the case when $\bxi i=\xi_i\ii{\xi_i\le w}$, $v=w=1$, and $\la=1/2$, 
an inequality similar to \eqref{eq:prop6.1}, with $\mu_3$ in place of $\be_v$, was obtained by Chen and Shao \cite[Proposition~6.1]{chen-shao05}. 


\begin{proof}[Proof of Proposition~\ref{prop:concentr}] 
This proof mainly follows the lines of the proof of the mentioned Proposition~6.1 in \cite{chen-shao05}. 
Just as there, one can use the Bennett--Hoeffding inequality \cite[(6.2)]{chen-shao05} to see that without loss of generality (w.l.o.g.) $\be_v$ is small enough, say $\be_v\le0.5/v^2$; cf.\ the main case $\mu_3\le0.17$ in the proof of Proposition~6.1 in \cite{chen-shao05}.   
Introduce now 
\begin{equation*}
	\de:=v^3\be_v/2; 
\end{equation*}
cf.\ \cite[Remark~2.2]{chen07}. 
Then the condition $\be_v\le0.5/v^2$ yields $\de\in(0,v/4]$, and so, 
the inequality $x(x\wedge d)\ge x^2-\frac{v^3}{4d}\,g(\frac xv)$ for all $x\in[0,\infty)$ and $d\in(0,v/4]$ implies that  
\begin{equation}\label{eq:>1/2}
\sum_1^n\E|\xi_j|(|\xi_j|\wedge\de)\ge1-\frac{v^3}{4\de}\,\be_v=\frac12;  	
\end{equation}
here we also used \eqref{eq:=1}. 
Therefore (cf.\ \cite[(6.8)]{chen-shao05}) and in view of the inequality $|x|\ii{x>w}\le kg(\frac xv)$ for $k:=\frac{v^2}{w^2}\,(v\vee w)$ and all $x\in\R$, 
\begin{align*}
	\sum_{j\ne i}\E|\xi_j|(|\bxi j|\wedge\de)
	&\ge\sum_{j\ne i}\E|\xi_j|[(|\xi_j|\wedge\de)-\de\ii{\xi_j>w}] \\ 
	&\ge-\de\E|\xi_i|-k\de\be_v+\sum_1^n\E|\xi_j|[(|\xi_j|\wedge\de) \\
	&\ge-\de-k\de\be_v+0.5
	=-\tfrac12\,v^3\be_v(1+k\be_v)+0.5 
	\ge0.4, 
\end{align*} 
if $\be_v$ is assumed (w.l.o.g.) to be small enough. 

The penultimate inequality in the last display follows by \eqref{eq:>1/2} and \eqref{eq:=1}, which latter implies $\E|\xi_i|\le\sqrt{\E\xi_i^2}\le1$. This trivial upper bound on $\E|\xi_i|$ can be improved, in an optimal way, as follows. Note that 
\begin{equation}\label{eq:young}
	u\le\frac{2k}3+\frac{g(u)}{3k^2} \quad\text{for all
	$u\in[0,\infty)$ and $k\in(0,\tfrac89]$,} 
\end{equation}
where the function $g$ is as defined in \eqref{eq:g}. 
This Young-type inequality can be quickly verified using the Mathematica command 
\verb9Reduce[ForAll[u, u >= 0,9 \verb9\[CapitalDelta][k,u] >= 0] && k > 0]9, which outputs \verb10 < k <= 8/91; here the argument of the command 
stands for the condition 
$\big(\forall_{u\in[0,\infty)}\De(k,u)\ge0\big)\;\&\;$ $k\in(0,\infty)$, where 
$\De(k,u):=\tfrac{2k}3+\tfrac{g(u)}{3k^2}-u$. 
Alternatively, \eqref{eq:young} can be checked as follows. First, note that for all $u\in[0,\infty)$ and $k\in(0,\frac 89]$ one has $\De(k,u)\ge\De_*(u):=\De(k_u,u)$, where $k_u:=g(u)^{1/3}\wedge\frac89
=g\big(u\wedge \frac 89\big)^{1/3}$. 
If now $u\in[0,\frac 89]$ then $k_u=u$ and $\De_*(u)=0$; 
if $u\in[\frac 89,1]$ then $k_u=\frac89$ and $\De_*(u)=\break \frac{27}{64}(u-\frac89)^2(u+\frac{16}9)\ge0$; 
and if $u\in[1,\infty)$ then $k_u=\frac89$ and $\De_*(u)=\frac{27}{64}(u-\frac{32}{27})^2\ge0$. 
This proves \eqref{eq:young}. 
Assuming now, w.l.o.g., that $\be_v\le(\frac89)^3$ one can use \eqref{eq:young} with $k=\be_v^{1/3}\le\frac89$ to see that  $\E\frac{|\xi_i|}{v}\le\tfrac{2\be_v^{1/3}}3+\tfrac{\be_v}{3\be_v^{2/3}}=\be_v^{1/3}$, whence  
\begin{equation*}
\E|\xi_i|\le v\be_v^{1/3}. 	
\end{equation*}
The latter upper bound, $v\be_v^{1/3}$, is no greater than 
the bound $\mu_3^{1/3}$ on $\E|\xi_i|$ used in \cite{chen-shao05}. 
Moreover, the bound $v\be_v^{1/3}$ 
is the best possible, for any given $v\in(0,\infty)$. Indeed, let $n\ge2$, $|\xi_1|\equiv x$, and $|\xi_i|\equiv y$ for $i=2,\dots,n$, where $n\to\infty$, $x:=1/\big(1+(n-1)^{1/6}\big)^{1/2}$, and  $y:=x/(n-1)^{5/12}$; then \eqref{eq:=1} holds, $\be_v\le(\frac89)^3$ eventually, and $\E|\xi_i|\sim v\be_v^{1/3}$. 

The other modifications that one needs to make in the proof of \cite[Proposition~6.1]{chen-shao05} are rather straightforward. 
Thus, the proof of Proposition~\ref{prop:concentr} is complete. 
\end{proof}

\begin{proof}[Proof of Theorem~\ref{th:}] 
This proof mainly follows the lines of the proof of the mentioned inequality \cite[(6.15)]{chen-shao05}. First, in view of \eqref{eq:bikelis}, w.l.o.g.\ $z\ge0$. 
Also, just to simplify the presentation, assume, as in \cite{chen-shao05}, that $v=w=1$; the modifications needed for general $v$ and $w$ in $(0,\infty)$ are straightforward. 
One can write, as in \cite[(6.16)]{chen-shao05},   
\begin{equation}\label{eq:sum}
	\P(S_1\le z)-\P(Z\le z)=R_1+R_{2,1}+R_{2,2}+R_3,
\end{equation}
where 
\begin{align*}
	R_1&:=\sum_i\E\xi_i^2\ii{\xi_i>1}\E f'_z(S_1), \\
	R_{2,1}&:=\sum_i\int_{-\infty}^1[\P(S_1\le z)-\P(S_1-\xi_{1,i}+t\le z)]\bK_i(t)\dd t, \\
	R_{2,2}&:=\sum_i\int_{-\infty}^1[\E S_1f_z(S_1)-\E(S_1-\xi_{1,i}+t)f_z(S_1-\xi_{1,i}+t)]\bK_i(t)\dd t, \\ 
		R_3&:=\sum_i\E\xi_i\ii{\xi_i>1}\E f_z(S_1-\xi_{1,i}), 
\end{align*}
where $\bK_i(t):=\E|\xi_{1,i}|\,\ii{|t|\le|\xi_{1,i}|,\,t\xi_{1,i}>0}$,  
$f_z(s):=\Phi(z)r(s)\ii{s>z}+\Phi(-z)r(-s)\ii{s\le z}$ is the Stein function, $r(s):=\Phi(-s)/\vpi(-s)$ is the Mills' ratio,
and $\Phi$ and $\vpi$ are, respectively, the distribution and density functions of the standard normal distribution. 

The terms $R_1$ and $R_2$ are bounded as in \cite{chen-shao05}, but using the inequalities $\sum_i\E\xi_i\ii{\xi_i>1}\le\sum_i\E\xi_i^2\ii{\xi_i>1}\le\be_1$ and \cite[(6.2)]{chen-shao05} with $t=\la$, to get 
\begin{equation}\label{eq:R1+R3}
	|R_1|+|R_3|\le_\la\be_1 e^{-\la z}. 
\end{equation}
Next, use Proposition~\ref{prop:concentr} and the fact that $(z-t)\wedge(z-\xi_{1,i})\ge z-1$ for $t\le1$, 
to write 
\begin{equation}\label{eq:R21}
	|R_{2,1}|\le_\la e^{-\la z}\sum_i\E f_{2,1}(\xi_{1,i},\eta_{1,i}), 
\end{equation}
where $\eta_{1,i}$ is an independent copy of $\xi_{1,i}$ and 
\begin{equation}\label{eq:f21}
	f_{2,1}(x,y):=\int_{-\infty}^1\big(1\wedge(|x|+|t|+\be_1)\big)|y|\ii{|t|\le|y|}\dd t. 
\end{equation}
In turn, one can bound $f_{2,1}(x,y)$ in two ways: for all real $x$ and $y$
\begin{align*}
	f_{2,1}(x,y)&\le \int_{-\infty}^1|y|\ii{|t|\le|y|}\dd t\le2y^2\le2\be_1y^2+3(x^2\vee y^2)\quad\text{and}\\ 
	f_{2,1}(x,y)&\le \int_{-\infty}^1(|x|+|t|+\be_1)|y|\ii{|t|\le|y|}\dd t\le2\be_1y^2+3(|x|^3\vee|y|^3), 
\end{align*} 
so that 
$f_{2,1}(x,y)\le2\be_1y^2+3g(x)+3g(y)$, where $g$ is as in \eqref{eq:g}. 
Now \eqref{eq:R21}, \eqref{eq:=1}, and \eqref{eq:|bxi|<|xi|} yield 
\begin{equation}\label{eq:R21<}
	|R_{2,1}|\le_\la e^{-\la z}\be_1. 
\end{equation}

The term $R_{2,2}$ is bounded similarly to $R_{2,1}$. Here, instead of Proposition~\ref{prop:concentr}, one can use \cite[Lemma~6.5]{chen-shao05} (with the factor $A_\la e^{-\la x}$ in place of $Ce^{-z/2}$) and \cite[(2.7)]{chen-shao05}. 
So, one obtains inequality \eqref{eq:R21} with $R_{2,1}$ and $f_{2,1}$ replaced by $R_{2,2}$ and $f_{2,2}$, respectively, where $f_{2,2}$ is obtained from $f_{2,1}$ by removing the summand $\be_1$ from \eqref{eq:f21}. 
Thus, one has \eqref{eq:R21<} with $R_{2,2}$ in place of $R_{2,1}$. 
On recalling also \eqref{eq:sum} and \eqref{eq:R1+R3}, 
this completes the proof of Theorem~\ref{th:}. 
\end{proof}

\begin{proof}[Proof of Proposition~\ref{prop:osipov}]
The first inequality in \eqref{eq:diff} is obvious. 
Let, for brevity, 
\begin{equation*}
	\De_w(z):=\P(S>z)-\P(\bS>z). 
\end{equation*} 
The upper bound $P_1$ on $\De_w(z)$ is obvious. 

Next, write (cf.\ e.g.\ \cite[(6.13) and on]{chen-shao05} or \cite{a.nagaev69-I,a.nagaev69-II,pin81}) 
\begin{equation*}
\begin{aligned}
	\P(S>z)-\P(\bS>z)&\le\P(S>z,\max_i\xi_i>w) \\
	\le\P(\max_i\xi_i>y)&+\P(S>z,\;y\ge\max_i\xi_i>w) \\
	\le\P(\max_i\xi_i>y)&+\sum_i\P(S-\xi_i>z-y,\;\max_{j\ne i}\xi_j\le y) \P(\xi_i>w),   
\end{aligned}	
\end{equation*}
which implies that $\De_w(z)\le P_2$. 

It is also easy to see that 
\begin{equation*}
	\sum_i\P(\xi_i>w)\le\frac{\P(\max_i\xi_i>w)}{1-\P(\max_i\xi_i>w)}  
\end{equation*}
if $\P(\max_i\xi_i>w)\ne1$. 
So, in the case when $\P(\max_i\xi_i>w)\le1/2$ one has 
$\sum_i\P(\xi_i>w)\le2\P(\max_i\xi_i>w)$ and hence $\De_w(z)\le P_2\le P_3$. 
In the other case, when $\P(\max_i\xi_i>w)>1/2$, use the obvious inequalities 
\begin{align*}
	\De_w(z)\le\P(S>z)&\le\P(\max_i\xi_i>y)+\P(S>z,\;\max_j\xi_j\le y) \\ 
	&\le\P(\max_i\xi_i>y)+Q_*(z,y) \le P_3. 
\end{align*} 
Thus, in either case $\De_w(z)\le P_3$. 

Concerning the bound $P_4$, choose   
\begin{equation*}
	y=\frac z{1+p/2}  
\end{equation*}
in \eqref{eq:P3} and use 
the Bennett--Hoeffding inequality (see e.g.\ \cite[Theorem~8.2]{pin94}) to write   
\begin{equation*}
	Q_*(z,y)
	\le\Big(\frac e{(z-y)y}\Big)^{(z-y)/y}\le_p\frac1{z^p}\le_p\frac1{(c+z)^p} 
\end{equation*}
in the case when $z\ge c$. So, in this case one has $\De_w(z)\le P_3\le P_4$. 
If now $z\in[0,c]$ then $1\le_{p,c}\frac1{(c+z)^p}$ and hence $\De_w(z)\le P_1\le P_4$. 
Thus, in either case $\De_w(z)\le P_4$. 

Finally, in the case when $z\ge c$, the inequality $\De_w(z)\le P_5$ follows by Chebyshev's inequality from $\De_w(z)\le P_4$ and \eqref{eq:mu_p}. 
If now $z\in[0,c]$ then $\De_w(z)\le P_1\le\frac{\mu_p}{w^p}
\le_{p,w,c}\frac{\mu_p}{(c+z)^p}$, so that in either case $\De_w(z)\le P_5$. 
\end{proof}

\bibliographystyle{abbrv}


\bibliography{C:/Users/iosif/Dropbox/mtu/bib_files/citations}


\end{document}